\documentclass[11pt]{article}

\usepackage[british]{babel}
\usepackage[T1]{fontenc}
\usepackage[utf8]{inputenc}
\usepackage{lmodern}
\usepackage{microtype}
\usepackage{mathtools,amssymb,amsmath,amsthm}
\usepackage{enumitem}
\usepackage[colorlinks=true,linkcolor=blue,citecolor=blue,urlcolor=blue]{hyperref}
\usepackage{tikz}
\usetikzlibrary{shapes.geometric, positioning, fit}

\setlist[enumerate]{leftmargin=2.2em,label=\textup{(\roman*)}}
\setlist[itemize]{leftmargin=2.2em}

\newtheorem{theorem}{Theorem}[section]
\newtheorem{lemma}[theorem]{Lemma}

\theoremstyle{remark}

\DeclareMathOperator{\diag}{diag}

\makeatletter
\newcommand*{\transpose}{%
  {\mathpalette\@transpose{}}%
}
\newcommand*{\@transpose}[2]{%
  \raisebox{\depth}{$\m@th#1\intercal$}%
}

\title{A proof of Haemers' toughness conjecture}

\author{%
\begin{minipage}[t]{0.45\textwidth}
\centering
\textsc{Gary Greaves}\\[0.3em]
\small Division of Mathematical Sciences\\
\small Nanyang Technological University\\
\small Singapore 637371\\
\small \texttt{gary@ntu.edu.sg}
\end{minipage}
\hfill
\begin{minipage}[t]{0.45\textwidth}
\centering
\textsc{Haoran Zhu}\\[0.3em]
\small Division of Mathematical Sciences\\
\small Nanyang Technological University\\
\small Singapore 637371\\
\small \texttt{zhuh0031@e.ntu.edu.sg}
\end{minipage}%
}

\date{}

\begin{document}

\maketitle

\begin{abstract}
We prove that if $\Gamma$ is a connected graph with minimum degree $\delta$ and Laplacian eigenvalues $0=\mu_1<\mu_2\leqslant \cdots \leqslant \mu_n$, then the toughness of $\Gamma$ is bounded below by $\mu_2/(\mu_n-\delta)$.
\end{abstract}

\section{Introduction}

All graphs considered in this paper are finite, simple, and undirected. For a graph
$\Gamma=(V,E)$ and a subset $U\subseteq V$, we write $\Gamma-U$ for the induced
subgraph on $V\setminus U$, and denote by $c(\Gamma-U)$ the number of connected
components of $\Gamma-U$. If $\Gamma$ is connected and not complete, its
\emph{toughness} is defined by
\[
        t(\Gamma):=\min\left\{\frac{|U|}{c(\Gamma-U)}:
        U\subseteq V,\; c(\Gamma-U)>1\right\}.
\]
By convention, $t(K_n)=\infty$.

The study of toughness was originally motivated by a conjecture of Chv\'atal
\cite{Chvatal1973}, which relates toughness to Hamiltonicity. Since then,
toughness has been shown to be closely connected to many structural properties of
graphs, including connectivity, pancyclicity, factors, spanning trees, matchings,
extendibility, and Hamiltonian paths; see, for example, the applications mentioned
in \cite{GuHaemers2022}. We refer the reader to the survey paper \cite{BBS} for
further results on toughness.

Spectral estimates for toughness via the adjacency matrix of a regular graph were
established in 1995 by Alon \cite{Alon1995} and Brouwer \cite{Brouwer1995}.
Since then, spectral bounds for toughness have been developed further in a number
of directions \cite{CG,CW,GuB,Gu2021,GuHaemers2022}. More recently, Haemers
\cite{H,GuHaemers2022} proposed a conjectural lower bound for toughness in terms of
the Laplacian spectrum, which strengthens these earlier estimates.

Let $L(\Gamma)$ denote the Laplacian matrix of $\Gamma$, and order its eigenvalues
in non-decreasing order as
\[
        0=\mu_1\leqslant \mu_2\leqslant \cdots \leqslant \mu_n.
\]
For connected graphs, $\mu_2>0$ is called the \emph{algebraic connectivity}
\cite{Fiedler1973}. The purpose of this paper is to prove Haemers’ Laplacian
toughness conjecture \cite{H} (see also \cite[Conjecture 2.1]{GuHaemers2022}),
which we record as our main theorem.

\begin{theorem}\label{thm:main}
Let $\Gamma$ be a connected graph with minimum degree $\delta$ and Laplacian
eigenvalues $0=\mu_1<\mu_2\leqslant \cdots \leqslant \mu_n$. Then
\[
         t(\Gamma) \geqslant \frac{\mu_2}{\mu_n-\delta}.
\]
\end{theorem}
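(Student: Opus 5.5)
Throughout the plan, fix $U\subseteq V$ with $c:=c(\Gamma-U)\geqslant 2$ and components $W_1,\dots,W_c$ of $\Gamma-U$. Put $s=|U|$, $w_i=|W_i|$ and $e_i=e(W_i,U)$. The goal is to show $s(\mu_n-\delta)\geqslant c\,\mu_2$. (If $\Gamma$ is complete there is nothing to prove.) The idea is to obtain an upper bound for $\mu_2$ and a lower bound for $\mu_n$ by Rayleigh quotients of explicit test vectors. Both bounds will be expressed in terms of the numbers $w_i,e_i,s$, and the graph is then eliminated by a purely numerical inequality.

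\emph{Upper bound for $\mu_2$.} Take $x$ that is constant equal to $x_i$ on $W_i$ and $0$ on $U$, with $\sum_i w_ix_i=0$. The only edges with nonzero contribution join some $W_i$ to $U$, so
\[
\mu_2\sum_i w_ix_i^2\leqslant \sum_i e_ix_i^2 .
\]
Optimising over $x$ (for example, for two components $i\neq j$, take $x_i=w_j$ and $x_j=-w_i$) gives, for every pair $i\neq j$,
\[
\mu_2\leqslant \frac{w_je_i+w_ie_j}{w_iw_j(w_i+w_j)}.
\]
A more global choice gives $\mu_2$ bounded by a weighted average of the ratios $e_i/w_i$, with weights chosen so that the sum over all $c$ components can be taken.

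\emph{Lower bound for $\mu_n-\delta$.} Each $v\in W_i$ has degree at least $\delta$, and all its neighbours lie in $W_i\cup U$. Hence $e_i\geqslant w_i\delta-2e(W_i)$ and $e_i\geqslant w_i(\delta-w_i+1)$. For the largest eigenvalue I would use test vectors supported on $W_i\cup U$ that alternate in sign, say $y=\mathbf 1_{W_i}-\alpha\mathbf 1_{U}$ or $y=\mathbf 1_{W_i}-\mathbf 1_{W_j}$ combined with a weighting on $U$. The Rayleigh quotient $y^\transpose L y/y^\transpose y$ is then expressed via $e_i$, the degrees $d_U(u)$ of $u\in U$ into the components, and $w_i$. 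Also $\mu_n\geqslant \Delta+1\geqslant \delta+1$, and $\mu_n\geqslant \max_{u\sim v}(d_u+d_v)$-type estimates are available if needed. The target inequality, summed over the components, is roughly
\[
s(\mu_n-\delta)\geqslant \sum_i \frac{e_i}{w_i}\quad\text{(or an analogous weighted form)},
\]
using $\sum_i e_i\leqslant \sum_{u\in U}d(u)$ and comparing $d(u)$ with $\mu_n$ for $u\in U$.

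\emph{Main obstacle.} The combination step is where I expect the real difficulty. The two Rayleigh estimates must be matched so that the factor $c$ appears, while the $\mu_2$ bound naturally involves only pairs or weighted averages. Small components, where $w_i\leqslant\delta$ so that $e_i$ is large relative to $w_i$, have to be balanced against large ones. I would first try the special case where all $W_i$ are singletons, since there $\mu_2\leqslant\min_i d(v_i)$ and $\mu_n$ can be bounded via the bipartite structure between $U$ and the independent set. I would then reduce the general case to it by a careful choice of weights $x_i\propto 1/w_i$ with the sign constraint handled by splitting the components into two groups of comparable total weight.
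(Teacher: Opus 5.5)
Your framework (fix a cut, bound $\mu_2$ from above and $\mu_n$ from below by Rayleigh quotients of vectors constant on the pieces, then eliminate the graph numerically) is the right one. Your degree bound $e_i\geqslant w_i(\delta-w_i+1)$ and your pairwise $\mu_2$ inequality are both used in the paper, though the numerator of the latter should read $w_j^2e_i+w_i^2e_j$. The first gap is the $\mu_n$ side. Vectors supported on $U$ and one or two components cannot give your target $s(\mu_n-\delta)\geqslant\sum_i e_i/w_i$. Take $K_{2,3}$ with $U$ the part of size $2$, an equality case with $\mu_2=\delta=2$, $\mu_n=5$, $c=3$. Every such vector has Rayleigh quotient at most $(5+\sqrt{17})/2$, so the best bound $\lambda$ of this kind gives $s(\lambda-\delta)=1+\sqrt{17}<6=c\mu_2$. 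The extremal vector $3\mathbf 1_U-2\mathbf 1_{V\setminus U}$ lives on all components at once. What you need is the joint optimisation over all vectors $\sum_i a_i\mathbf 1_{W_i}+b\mathbf 1_U$, i.e.\ interlacing for the quotient matrix of $\{W_1,\dots,W_c,U\}$. The Schur complement of $\mu_nI-Q_\pi(L(\Gamma))\succeq 0$ gives $\sum_i e_i/(\mu_n-e_i/w_i)\leqslant s$. Then $\mu_n-e_i/w_i\leqslant\mu_n-\delta+w_i-1\leqslant(\mu_n-\delta)w_i$ (as $\mu_n-\delta>1$) yields the target. Two side remarks. First, $\max_{u\sim v}(d_u+d_v)$ is an \emph{upper} bound for $\mu_n$, as $K_n$ shows. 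Second, comparing $d(u)$ with $\mu_n$ would aim at $\sum_i e_i\leqslant s(\mu_n-\delta)$, which is false: two copies of $K_{m+1}$ sharing a vertex give $2m$ versus $m+1$.

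Once the target is available, your pairwise inequality shows that at most one component has $e_i/w_i<\mu_2$. If none does, then $s(\mu_n-\delta)\geqslant\sum_i e_i/w_i\geqslant c\mu_2$ immediately. So neither the factor $c$ nor small components are the obstacle. The hard case is a single component $W_1$ with $e_1/w_1<\mu_2$. There your $\mu_2$ bound is too weak in principle, because vectors vanishing on $U$ miss what makes $\mu_2$ small. Take $C_5$ with $U=\{0,2\}$, so the components are $\{1\}$ and $\{3,4\}$, the latter exceptional. The least Rayleigh quotient over vectors vanishing on $U$ and orthogonal to $\mathbf 1$ is $5/3$, whereas $\mu_2=(5-\sqrt5)/2$. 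Even with the exact $\mu_n$, $c\cdot\frac{5}{3}=\frac{10}{3}>1+\sqrt5=s(\mu_n-\delta)$, so no lower bound for $\mu_n$ can close the comparison. The paper instead uses all vectors constant on each part of $\{W_1,\dots,W_c,U\}$ and orthogonal to $\mathbf 1$, including those nonzero on $U$; equivalently, $Q_\pi(L(\Gamma)+\frac{\mu_2}{n}J)-\mu_2I\succeq 0$. The matrix determinant lemma then gives $n+\mu_2\sum_{j\geqslant 2}\frac{w_j}{e_j/w_j-\mu_2}\leqslant\frac{\mu_2 w_1}{\mu_2-e_1/w_1}$, whose $n$-term is essential. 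This is combined with the full Schur-complement inequality and Cauchy--Schwarz to reach a contradiction. Your suggested reduction to singletons via weights $x_i\propto 1/w_i$ does not address this case.
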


Theorem~\ref{thm:main} subsumes the bounds proved by Gu and Haemers
\cite{GuHaemers2022}, and in the regular case it also subsumes Brouwer's
conjecture \cite{Brouwer1995}, which was recently proved by Gu \cite{GuB}.

As noted in \cite[Page 54]{GuHaemers2022}, Theorem~\ref{thm:main} is sharp for
complete multipartite graphs. 
For $K_{n_1,n_2,\dots,n_s}$ with $n_1 \geqslant \dots \geqslant n_s$ and corresponding partite sets $V_1,\dots,V_s$, its toughness is realised by the vertex cut $U = \bigcup_{i=2}^sV_i $ with $|U|/c(K_{n_1,n_2,\dots,n_s}-U) = (n-n_1)/n_1$.
However, equality is not restricted to complete multipartite graphs. 
Indeed, let $K_{n_1,n_2,\dots,n_s}$ be a complete multipartite graph with
\[
n_1=\cdots=n_r>n_{r+1}\geqslant \cdots \geqslant n_s,
\]
where $n_1\geqslant 3$ and $s\geqslant 2$. 
If $r = 1$, we additionally assume that $n_{2} \geqslant 3$.
Its largest Laplacian eigenvalue is $n$
and its algebraic connectivity is $n-n_1$; see \cite[Section 1.3]{BW}. 
Moreover, the multiplicity of the algebraic connectivity is $r(n_1-1)$.

Now add an edge inside one of the partite sets $V_i$ of size at least $3$ and $i \in \{2,\dots,s\}$.
The resulting graph $\Gamma$ has the same
minimum degree, algebraic connectivity, and largest Laplacian eigenvalue as
$K_{n_1,n_2,\dots,n_s}$. 
Indeed, the largest Laplacian eigenvalue remains $n$, since the complement of $\Gamma$ is disconnected \cite[Section 3.9]{BW}. 
If $X=L(\Gamma)-L(K_{n_1,n_2,\dots,n_s})$, then $X$ is positive semidefinite, so by
Weyl's inequality \cite[Theorem 2.8.1(iii)]{BW}, the algebraic connectivity of
$\Gamma$ is at least that of $K_{n_1,n_2,\dots,n_s}$. 
On the other hand, $X$ has
rank $1$, while the $(n-n_1)$-eigenspace of $L(K_{n_1,n_2,\dots,n_s})$ has
dimension at least $2$. 
Hence there exists an $(n-n_1)$-eigenvector annihilated by
$X$, so $\Gamma$ also has algebraic connectivity $n-n_1$.
Clearly, $U = \bigcup_{i=2}^s V_i$ is a vertex cut of $\Gamma$ with $|U|/c(\Gamma-U) = (n-n_1)/n_1$

If the multiplicity of the algebraic connectivity is sufficiently large, this
construction may be iterated to produce further non-complete multipartite graphs
attaining equality in Theorem~\ref{thm:main}. 
Thus, a complete characterisation of the equality cases remains open.

The paper is organised as follows. In Section~\ref{sec:vcp}, we introduce the
vertex-cut partition of a graph and establish several key lemmas. In
Section~\ref{sec:proof}, we prove Theorem~\ref{thm:main}.

\section{Vertex-cut partition}
\label{sec:vcp}

Let $\Gamma = (V,E)$ be a connected $n$-vertex non-complete graph with minimum degree $\delta$.
Let $\pi = \{P_1,\dots,P_r\}$ be a vertex partition of $V$ and let $M$ be a matrix whose rows and columns are indexed by $V$.
Given a subset $P \subset V$, we denote by $\chi(P) \in \{0,1\}^V$ the characteristic vector of $P$.
We define the (symmetrised) quotient matrix $Q_\pi(M)$ as the matrix of order $|\pi|$ whose $(i,j)$-entry is equal to $\frac{\chi({P_i})^\transpose M\chi({P_j})}{\sqrt{|P_i||P_j|}}$.

Let $U\subseteq V$ be a vertex cut of $\Gamma$ and let
$H_1,\ldots,H_{c}$ be the components of $\Gamma-U$.  
For each $i$, denote by $e_i$ the number of edges joining $H_i$ to $U$.  
Since $\Gamma$ is connected, $e_i>0$ for all $i$.
Furthermore, note that, since each vertex of $H_i$ has at most $|H_i|-1$ neighbours inside $H_i$, we have
\begin{equation}
    \label{eq:degbound}
    \dfrac{e_i}{|H_i|}\geqslant \delta- |H_i|+1.
\end{equation}

Let $\pi=\{H_1,\dots,H_{c+1}\}$, where $H_{c+1} := U$ and set $e = \sum_{i=1}^{c}e_i$.
We call $\pi$ the \textbf{vertex-cut partition} of $U$.
Denote by $\mathbf 1$ the all-ones vector of length $n$ and $J = \mathbf{1}\mathbf{1}^\transpose$ the all-ones matrix of order $n$.
Then
\begin{align*}
    Q_\pi(L(\Gamma))&=
\begin{bmatrix}
 \dfrac{e_1}{|H_1|}&& &-\dfrac{e_1}{\sqrt{|H_1||U|}}\\
 &\ddots&&\vdots\\
 &&\dfrac{e_c}{|H_c|}&-\dfrac{e_{c}}{\sqrt{|H_c||U|}}\\
 -\dfrac{e_1}{\sqrt{|H_1||U|}}&\cdots&
 -\dfrac{e_c}{\sqrt{|H_c||U|}}&
  \dfrac{e}{|U|}
\end{bmatrix}; \\
Q_\pi\left (L(\Gamma)+\frac{\mu_2}{n}J\right )&=
Q_\pi(L(\Gamma)) + 
\frac{\mu_2}{n}\left [ \sqrt{|H_i||H_j|} \right ]_{i,j =1}^{c+1}.
\end{align*}

First, we establish eigenvalue bounds for these matrices.

\begin{lemma}
    \label{lem:ps}
    Let $\Gamma$ be a connected non-complete graph with  Laplacian eigenvalues $0=\mu_1<\mu_2\leqslant \cdots \leqslant \mu_n$.
Suppose that $U$ is a vertex cut of $\Gamma$ with vertex-cut partition $\pi=\{H_1,\dots,H_{c},U\}$.
    Then the matrices $\mu_n I - Q_\pi(L(\Gamma))$ and $Q_\pi\left (L(\Gamma)+\frac{\mu_2}{n}J\right ) - \mu_2I$ are both positive semidefinite.
\end{lemma}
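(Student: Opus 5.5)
The plan is to realise both quotient matrices as compressions of the full matrices to a subspace with an orthonormal basis, and then apply Rayleigh-quotient (Cauchy interlacing type) bounds. Let $S$ be the $n\times(c+1)$ matrix whose $i$-th column is $\chi(P_i)/\sqrt{|P_i|}$, where $P_1,\dots,P_{c+1}$ are $H_1,\dots,H_c,U$. The parts are disjoint and nonempty, so the columns of $S$ are orthonormal, i.e.\ $S^\transpose S=I_{c+1}$. By the definition of $Q_\pi$ we have $Q_\pi(M)=S^\transpose M S$ for every $M$.

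For the first claim, note that $L(\Gamma)$ is symmetric with largest eigenvalue $\mu_n$, so $\mu_n I_n - L(\Gamma)$ is positive semidefinite. Conjugating by $S$ preserves positive semidefiniteness:
\[
S^\transpose(\mu_n I_n - L(\Gamma))S=\mu_n S^\transpose S - Q_\pi(L(\Gamma)) = \mu_n I - Q_\pi(L(\Gamma)).
\]
This gives the first claim directly.

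For the second claim, the key step is to show that $L(\Gamma)+\frac{\mu_2}{n}J-\mu_2 I_n$ is positive semidefinite. Once that holds, conjugating by $S$ in the same way yields $Q_\pi(L(\Gamma)+\frac{\mu_2}{n}J)-\mu_2 I$, which is therefore positive semidefinite. To verify the key step, use a common orthonormal eigenbasis. The vector $\mathbf 1/\sqrt n$ is an eigenvector of $L(\Gamma)$ with eigenvalue $0$, and an eigenvector of $J$ with eigenvalue $n$. Every eigenvector of $L(\Gamma)$ orthogonal to $\mathbf 1$ is annihilated by $J$. Hence the eigenvalues of $L(\Gamma)+\frac{\mu_2}{n}J$ are $\mu_2$ (coming from $\mathbf 1$) together with $\mu_2,\dots,\mu_n$. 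All of these are at least $\mu_2$, which is exactly the required positive semidefiniteness.

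There is no real obstacle. The only care needed is the normalisation: one checks that the symmetrised quotient equals $S^\transpose M S$ with $S^\transpose S=I$. Without this, one would get a generalised eigenvalue problem instead of a plain identity shift.
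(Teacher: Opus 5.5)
Your proof is correct and is essentially the paper's argument. The paper passes to the similar non-symmetric quotient matrices, invokes quotient-matrix interlacing \cite[Corollary 2.5.3]{BW}, and then uses that $L(\Gamma)+\frac{\mu_2}{n}J$ has smallest eigenvalue $\mu_2$; your congruence $Q_\pi(M)=S^\transpose M S$ with $S^\transpose S=I$ is the mechanism underlying that interlacing result, so you reach the same conclusion directly and self-containedly.
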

\begin{proof}
    The matrices $Q_\pi(L(\Gamma))$ and $Q_\pi(L(\Gamma)+\frac{\mu_2}{n}J)$ are similar to the matrices $M_1$ and $M_2$ whose $(i,j)$-entries are equal to $\frac{\chi({P_i})^\transpose L(\Gamma)\mathbf \chi({P_j})}{|P_i|}$ and $\frac{\chi({P_i})^\transpose \left ( L(\Gamma)+\frac{\mu_2}{n}J\right )\mathbf \chi({P_j})}{|P_i|}$, respectively.
    By \cite[Corollary 2.5.3]{BW} the largest eigenvalue of $M_1$ is at most $\mu_n$ and the smallest eigenvalue of $M_2$ is at least that of $L(\Gamma)+\frac{\mu_2}{n}J$.
    The lemma follows since $\mathbf 1$ is a null vector of $L(\Gamma)$ and $L(\Gamma)$ has rank $n-1$ (since $\Gamma$ is connected).
\end{proof}

We will require the following spectral bounds on Laplacian eigenvalues.
For a proof see \cite{Fiedler1973} and \cite[Proposition 3.9.3]{BW}.

\begin{lemma}
\label{lem:mubounds}
Let $\Gamma$ be a connected non-complete graph with minimum degree $\delta$ and Laplacian eigenvalues $0=\mu_1<\mu_2\leqslant \cdots \leqslant \mu_n$.
Then
\[
        0<\mu_2\leqslant \delta  < \mu_n-1. 
\]
\end{lemma}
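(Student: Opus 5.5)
We split the statement into three inequalities and prove each with a Rayleigh-quotient or complementation argument; the only delicate point is the strictness in $\delta<\mu_n-1$.

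\emph{Step 1: $\mu_2>0$.} Since $\Gamma$ is connected, $L(\Gamma)$ has rank $n-1$, as already used in the proof of Lemma~\ref{lem:ps}. Its kernel is spanned by $\mathbf 1$, so $0=\mu_1$ is a simple eigenvalue and hence $\mu_2>0$.

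\emph{Step 2: a lower bound on $\mu_n$, with strictness.} Let $d$ be a degree, fix a vertex $v$ of degree $d$ with neighbourhood $N(v)$, and set $x=d\,\chi(\{v\})-\chi(N(v))$. We evaluate $x^\transpose L(\Gamma)x=\sum_{ab\in E}(x_a-x_b)^2$ edge by edge.
\begin{itemize}
\item Each of the $d$ edges $vw$ with $w\in N(v)$ contributes $(d+1)^2$.
\item Edges inside $N(v)$ contribute $0$.
\item Each edge from $N(v)$ to $V\setminus(N(v)\cup\{v\})$ contributes $1$.
\end{itemize}
Since $x^\transpose x=d^2+d$, we obtain
\[
\mu_n\geqslant \frac{x^\transpose L(\Gamma)x}{x^\transpose x}=d+1+\frac{m_v}{d(d+1)},
\]
where $m_v$ is the number of edges leaving $N(v)\cup\{v\}$ from $N(v)$. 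Hence $\mu_n\geqslant\Delta+1$, where $\Delta$ is the maximum degree.

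To prove $\delta<\mu_n-1$ we consider two cases.
\begin{itemize}
\item If $\Delta>\delta$, then $\mu_n\geqslant\Delta+1>\delta+1$ directly.
\item If $\Gamma$ is $\delta$-regular, we need some vertex $v$ with $m_v>0$. Because $\Gamma$ is connected and not complete, it has two vertices at distance exactly $2$, say $v$ and $u$ with a common neighbour $w$. The edge $wu$ then leaves $N(v)\cup\{v\}$, so $m_v\geqslant1$ and $\mu_n>\delta+1$.
\end{itemize}
This regular case is the only point needing care.

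\emph{Step 3: $\mu_2\leqslant\delta$ via the complement.} We use the standard identity that the Laplacian eigenvalues of the complement $\overline\Gamma$ are $0$ and $n-\mu_i$ for $i\geqslant2$; this follows from $L(\Gamma)+L(\overline\Gamma)=nI-J$ and the fact that the two Laplacians commute on $\mathbf 1^\perp$. Therefore $\mu_2=n-\mu_n(\overline\Gamma)$.

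Since $\Gamma$ is not complete, $\overline\Gamma$ has at least one edge, and its maximum degree is $n-1-\delta\geqslant1$. Step~2 does not use connectivity for the non-strict bound: pick a vertex of maximum degree in any graph with an edge, and the same computation gives $\mu_n(\overline\Gamma)\geqslant (n-1-\delta)+1=n-\delta$. Substituting, $\mu_2\leqslant n-(n-\delta)=\delta$.

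Combining Steps~1--3 gives $0<\mu_2\leqslant\delta<\mu_n-1$.
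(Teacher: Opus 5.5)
Your proof is correct, but it differs from the paper's, which gives no argument and instead cites two results. It cites Fiedler for $\mu_2\leqslant\kappa(\Gamma)\leqslant\delta$ for non-complete graphs, where $\kappa$ is the vertex connectivity. It cites Brouwer--Haemers, Proposition~3.9.3, for $\mu_n\geqslant\Delta+1$, with equality for connected graphs only when $\Delta=n-1$. Strictness in $\delta<\mu_n-1$ then comes from the same case split you use: either $\Delta>\delta$, or $\Gamma$ is regular with $\Delta=\delta<n-1$.

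Your Step~2 is a direct proof of the part of that proposition the paper actually needs. The extra term $m_v/(d(d+1))$ in your Rayleigh quotient is what forces strictness. Your distance-$2$ argument handles the only case where it matters, so you don't need the general equality characterisation.

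The real difference is Step~3. Instead of Fiedler's connectivity bound, you apply the same test vector to $\overline\Gamma$ and use $\mu_2(\Gamma)=n-\mu_n(\overline\Gamma)$. One small point is left implicit there. The identity holds because $L(\overline\Gamma)=nI-J-L(\Gamma)$ is positive semidefinite, so every $n-\mu_i\geqslant 0$ and the largest of them is $n-\mu_2$. Alternatively, $\mu_n(\overline\Gamma)\geqslant 2>0$ already shows it is not the zero eigenvalue.

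What your route buys is economy and self-containment: one elementary test-vector computation gives all three inequalities. Fiedler's route gives the stronger bound $\mu_2\leqslant\kappa(\Gamma)$, which is natural in a toughness context but is not used anywhere in this paper.
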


Now we are ready to establish a key upper bound on the sum of average degrees of vertices in the components $H_1,\dots,H_c$.

\begin{lemma}\label{lem:up1}
Let $\Gamma$ be a connected non-complete graph with minimum degree $\delta$ and Laplacian eigenvalues $0=\mu_1<\mu_2\leqslant \cdots \leqslant \mu_n$.
Suppose that $U$ is a vertex cut of $\Gamma$ with vertex-cut partition $\pi=\{H_1,\dots,H_{c},U\}$.
Then $\mu_n > e_i/|H_i|$ for each $i \in \{1,\dots,c\}$ and
\[
      \sum_{i=1}^{ c} \frac{e_i}{|H_i|} \leqslant (\mu_n-\delta)|U|.
\]
\end{lemma}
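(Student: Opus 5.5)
The plan is to read both claims off the positive semidefiniteness of $\mu_n I - Q_\pi(L(\Gamma))$ from Lemma~\ref{lem:ps}, and then to use \eqref{eq:degbound} together with Lemma~\ref{lem:mubounds} to pass from a weighted inequality to the unweighted sum.

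Write $a_i = e_i/|H_i|$ and $b_i = e_i/\sqrt{|H_i||U|}$. The matrix $\mu_n I - Q_\pi(L(\Gamma))$ is an arrowhead matrix. Its diagonal entries are $\mu_n - a_1,\dots,\mu_n-a_c$ and $\mu_n - e/|U|$, its last row and column have off-diagonal entries $b_i$, and all its other off-diagonal entries are zero.

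For the first claim, positive semidefiniteness gives $\mu_n - a_i \geqslant 0$, since this is a diagonal entry. Suppose $\mu_n = a_i$ for some $i$. Then the $2\times 2$ principal minor on indices $i$ and $c+1$ is $-b_i^2$, which must be nonnegative. Hence $b_i = 0$ and so $e_i=0$, contradicting $e_i>0$ (connectivity). Thus $\mu_n > a_i$ for every $i$.

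Since the leading $c\times c$ block $\diag(\mu_n - a_i)$ is then positive definite, positive semidefiniteness of the whole matrix is equivalent to nonnegativity of the Schur complement:
\[
\mu_n - \frac{e}{|U|} \geqslant \sum_{i=1}^c \frac{b_i^2}{\mu_n - a_i} = \frac{1}{|U|}\sum_{i=1}^c \frac{e_i a_i}{\mu_n - a_i}.
\]
Multiply by $|U|$ and move $e=\sum e_i$ to the right; each term becomes $e_i\bigl(1 + a_i/(\mu_n-a_i)\bigr) = e_i\mu_n/(\mu_n-a_i)$. Dividing by $\mu_n>0$ then gives
\[
|U| \geqslant \sum_{i=1}^c \frac{e_i}{\mu_n - a_i} = \sum_{i=1}^c \frac{|H_i|\, a_i}{\mu_n - a_i}.
\]

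To obtain the stated bound, it suffices to show termwise that $\frac{|H_i|}{\mu_n - a_i} \geqslant \frac{1}{\mu_n-\delta}$; note $\mu_n-\delta>0$ by Lemma~\ref{lem:mubounds}. Multiplying this by $a_i>0$ and summing gives $\sum a_i \leqslant (\mu_n-\delta)|U|$.

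The termwise inequality is equivalent to $a_i \geqslant \mu_n - |H_i|(\mu_n-\delta)$. This is the only real obstacle, and \eqref{eq:degbound} handles it. Writing $h=|H_i|$, we have $a_i \geqslant \delta - h + 1$, and
\[
(\delta - h + 1) - \bigl(\mu_n - h(\mu_n - \delta)\bigr) = (h-1)(\mu_n - \delta - 1) \geqslant 0.
\]
This holds because $h\geqslant 1$ and $\mu_n > \delta+1$ by Lemma~\ref{lem:mubounds}, which completes the proof.
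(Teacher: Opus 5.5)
Your proposal is correct and follows the paper's proof essentially step for step: positive semidefiniteness of $\mu_n I - Q_\pi(L(\Gamma))$ via Lemma~\ref{lem:ps}, the Schur complement yielding $\sum_{i=1}^{c} e_i/(\mu_n - e_i/|H_i|) \leqslant |U|$, and then the termwise bound $\mu_n - e_i/|H_i| \leqslant (\mu_n-\delta)|H_i|$ from \eqref{eq:degbound} and Lemma~\ref{lem:mubounds}. Your $2\times 2$ principal minor argument is a welcome addition, since the paper asserts the strict inequality $\mu_n > e_i/|H_i|$ without spelling out why positive semidefiniteness rules out equality, and that strictness is needed for the Schur complement to be defined.
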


\begin{proof}
By Lemma~\ref{lem:ps}, the matrix $\mu_n I - Q_\pi(L(\Gamma))$ is positive semidefinite, whence $\mu_n > e_i/|H_i|$ for each $i \in \{1,\dots,c\}$.

The Schur complement of the leading ${c}\times {c}$ principal submatrix of $\mu_n I - Q_\pi(L(\Gamma))$ is
\[
\mu_n-e/|U|-\frac{1}{|U|}\sum_{i=1}^{c}\frac{e_i^2/|H_i|}{\mu_n-e_i/|H_i|} = \mu_n-\frac{\mu_n}{|U|}\sum_{i=1}^{c}\frac{e_i}{\mu_n-e_i/|H_i|}.
\]
By \cite[Theorem 2.7.1 (ii)]{BW}, this Schur complement must be positive semidefinite.
Thus,
\begin{equation}
\label{eq:Uge}
    \sum_{i=1}^{c}\frac{e_i}{\mu_n-e_i/|H_i|} \leqslant |U|.
\end{equation}

Using Lemma~\ref{lem:mubounds} together with \eqref{eq:degbound}, we find that
\[
        \mu_n- \dfrac{e_i}{|H_i|}
        \leqslant \mu_n-\delta+|H_i|-1
        \leqslant (\mu_n-\delta)|H_i| .
\]
Substituting this in~\eqref{eq:Uge} yields
\[
        |U| \geqslant 
        \sum_{i=1}^{c}\frac{e_i}{\mu_n-e_i/|H_i|}
        \geqslant \frac{1}{\mu_n-\delta}\sum_{i=1}^{c}  \frac{e_i}{|H_i|}. \qedhere
\]
\end{proof}

Next, we show that there is at most one index $i \in \{1,\dots,c\}$ for which $e_i/|H_i| < \mu_2$.
If $i = 1$ is such an index, then we can prove an implicit upper bound on $\mu_2-e_1/|H_1|$.

\begin{lemma}\label{lem:am1}
Let $\Gamma$ be a connected non-complete graph with minimum degree $\delta$ and Laplacian eigenvalues $0=\mu_1<\mu_2\leqslant \cdots \leqslant \mu_n$.
Suppose that $U$ is a vertex cut of $\Gamma$ with vertex-cut partition $\pi=\{H_1,\dots,H_{c},U\}$.
Let $i, j \in \{1,\dots,c\}$ be distinct indices.
Then
\[
(e_i/|H_i| \! - \! \mu_2)|H_j|+(e_j/|H_j| \! - \!\mu_2)|H_i| \geqslant 0.
\]
Furthermore, if $e_1/|H_1| < \mu_2$
then
\begin{equation}
\label{eqn:matdetineq}
    n+\mu_2\sum_{j=2}^{c} \frac{|H_j|}{e_j/|H_j| \! - \! \mu_2}\leqslant
        \frac{\mu_2|H_1|}{\mu_2-e_1/|H_1|}.
\end{equation}
\end{lemma}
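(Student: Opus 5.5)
The plan is to extract both statements from the positive semidefiniteness of $N := Q_\pi\!\left(L(\Gamma)+\frac{\mu_2}{n}J\right)-\mu_2 I$, given by Lemma~\ref{lem:ps}. Write $h_k=|H_k|$ for $k\leqslant c$, $h_{c+1}=|U|$, and $d_k = e_k/h_k$. For distinct $i,j\leqslant c$, the entry of $Q_\pi(L(\Gamma))$ in position $(i,j)$ is zero, because no edges join $H_i$ and $H_j$. Hence the principal $2\times 2$ submatrix of $N$ on $\{i,j\}$ is
\[
\begin{bmatrix} d_i-\mu_2+\frac{\mu_2}{n}h_i & \frac{\mu_2}{n}\sqrt{h_ih_j}\\ \frac{\mu_2}{n}\sqrt{h_ih_j} & d_j-\mu_2+\frac{\mu_2}{n}h_j\end{bmatrix}.
\]

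For the first claim, I would test $N$ against the vector $x$ with $x_i=\sqrt{h_j}$, $x_j=-\sqrt{h_i}$ and zeros elsewhere. Then $x^\transpose N x = (d_i-\mu_2)h_j+(d_j-\mu_2)h_i$. The $J$-part contributes $\frac{\mu_2}{n}(\sqrt{h_ih_j}-\sqrt{h_ih_j})^2=0$, since $x$ is orthogonal to the vector $(\sqrt{h_k})_k$. Positive semidefiniteness then gives the inequality. In particular, if $d_1<\mu_2$, every $j\geqslant 2$ has $d_j>\mu_2$; this is needed below so that the denominators $d_j-\mu_2$ are positive.

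For \eqref{eqn:matdetineq}, I would use the whole matrix via a quadratic form. Note that $N = D + \frac{\mu_2}{n}ss^\transpose$ restricted appropriately, where $s=(\sqrt{h_k})_k$ and $D = Q_\pi(L(\Gamma))-\mu_2 I$. The simplest route that avoids $U$ is to take the principal submatrix on $\{1,\dots,c\}$:
\[
N' = \diag(d_k-\mu_2)_{k\leqslant c} + \tfrac{\mu_2}{n}s's'^\transpose.
\]
Here $N'$ is positive semidefinite. However, since $U$ enters $n$, I expect the intended inequality to need the full matrix, so I would instead work with the full $(c+1)\times(c+1)$ matrix $N$. Its $U$-row involves $-e_k/\sqrt{h_k|U|}$ and the diagonal entry $e/|U|-\mu_2$.

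The cleanest approach is to choose a test vector $y$ supported on all indices, tuned to make $y^\transpose N y$ reproduce \eqref{eqn:matdetineq}. I would take $y_k=\sqrt{h_k}\,a_k$ for $k\leqslant c+1$, which gives
\[
y^\transpose N y=\sum_{\text{edges }uv}(f(u)-f(v))^2-\mu_2\sum_v f(v)^2+\tfrac{\mu_2}{n}\Bigl(\sum_v f(v)\Bigr)^2,
\]
where $f$ is constant, equal to $a_k$, on each part. Choosing $a_{c+1}=0$ on $U$ yields
\[
\sum_{k}(d_k-\mu_2)h_k a_k^2+\tfrac{\mu_2}{n}\Bigl(\sum_k h_k a_k\Bigr)^2\geqslant 0.
\]
Now set $a_1=-1/(\mu_2-d_1)$ and $a_j = 1/(d_j-\mu_2)$ for $j\geqslant 2$, which is the minimiser for fixed sum. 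Writing $A=\frac{h_1}{\mu_2-d_1}$ and $B=\sum_{j\geqslant2}\frac{h_j}{d_j-\mu_2}$, the inequality becomes $-A+B+\frac{\mu_2}{n}(B-A)^2\geqslant0$. Since $B-A$ may be negative, I would instead scale by a parameter: take $a_1=-t/(\mu_2-d_1)$ and $a_j=1/(d_j-\mu_2)$ and optimise over $t$. This gives $-At^2+B+\frac{\mu_2}{n}(B-At)^2\geqslant 0$ for all $t$. Viewed as a quadratic in $t$ with leading coefficient $A(\frac{\mu_2}{n}A-1)$, this forces either $\frac{\mu_2}{n}A\geqslant 1$ together with appropriate discriminant conditions, or a contradiction as $t\to\infty$. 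Because $B>0$, taking $t$ large shows $\frac{\mu_2 A}{n}\geqslant 1$ is necessary. Then the discriminant condition should simplify to $n+\mu_2 B\leqslant\mu_2 A$, which is exactly \eqref{eqn:matdetineq}.

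The main obstacle is this final algebraic step: checking that the minimum over $t$ of that quadratic is nonnegative precisely when $n+\mu_2B\leqslant \mu_2A$. Equivalently, the $2\times2$ matrix $\begin{bmatrix}-A+\frac{\mu_2}{n}A^2 & -\frac{\mu_2}{n}AB\\ -\frac{\mu_2}{n}AB & B+\frac{\mu_2}{n}B^2\end{bmatrix}$ must be positive semidefinite. Its determinant is $AB\bigl(\frac{\mu_2}{n}(A-B)-1\bigr)$, and nonnegativity of this determinant gives the claim directly.
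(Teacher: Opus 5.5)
Your proof is correct. The first inequality is obtained exactly as in the paper: a test vector on $\{i,j\}$ orthogonal to $\mathbf h=(\sqrt{h_k})_k$, which kills the rank-one term.

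For the second inequality you take a different, more hands-on route.
The paper applies the matrix determinant lemma to the whole $c\times c$ block $X=D+\frac{\mu_2}{n}\mathbf h\mathbf h^\transpose$, with $D=\diag(d_k-\mu_2)$. It combines $\det X\geqslant 0$ with $\det D<0$ (exactly one negative diagonal entry) to get $1+\frac{\mu_2}{n}\mathbf h^\transpose D^{-1}\mathbf h\leqslant 0$.
You instead compress $X$ onto the plane spanned by the two sign-pieces of $D^{-1}\mathbf h$. These are the vector supported on index $1$ with entry $\sqrt{h_1}/(d_1-\mu_2)$, and the vector with entries $\sqrt{h_j}/(d_j-\mu_2)$ for $j\geqslant 2$. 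The resulting $2\times 2$ matrix is positive semidefinite, and your determinant $AB\bigl(\frac{\mu_2}{n}(A-B)-1\bigr)\geqslant 0$ with $A,B>0$ gives the claim.

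Since $\mathbf h^\transpose D^{-1}\mathbf h=B-A$, both arguments land on literally the same inequality. Yours needs only two quadratic-form evaluations and a $2\times 2$ determinant checked by hand (in effect the two-dimensional case of the determinant lemma), and it avoids the sign count for $\det D$. The paper's version is shorter to write down.

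Three small points of presentation:
\begin{enumerate}
\item Your worry that $U$ ``enters $n$'' is unfounded. $n$ appears only through the coefficient $\mu_2/n$ of the rank-one term, and your choice $a_{c+1}=0$ is exactly restriction to the block $N'$ you first wrote down. The detour through the full $(c+1)\times(c+1)$ matrix can be dropped.
\item The $t\to\infty$ and discriminant discussion is unnecessary. Nonnegativity for all $t$ (hence, by homogeneity, for all $(t,s)$) is precisely positive semidefiniteness of your $2\times 2$ matrix, and only its determinant is used.
\item State explicitly that $B>0$, because $c\geqslant 2$ for a vertex cut and $d_j>\mu_2$ for every $j\geqslant 2$.
\end{enumerate}
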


\begin{proof}
The matrix
\[
        X = \diag(e_1/|H_1| \! - \! \mu_2,\dots,e_c/|H_c| \! - \!\mu_2)
        +\frac{\mu_2}{n}\left [\sqrt{|H_i| |H_j|}\right]_{i,j=1}^{c}
\]
is a principal submatrix of $Q_\pi\left (L(\Gamma)+\frac{\mu_2}{n}J\right ) - \mu_2I$ and is thus positive semidefinite by Lemma~\ref{lem:ps}.
Let $\mathbf u = (u_1,\dots,u_c)\in\mathbb{R}^{c}$ be a
non-zero vector supported on $\{i,j\}$ such that
\[
        \sqrt{|H_i|}\,u_i+\sqrt{|H_j|}\,u_j=0 .
\]
The rank-one term therefore vanishes when applied to $\mathbf u$, and
\begin{align*}
        &\mathbf u^\transpose X
        \mathbf u
        =
        (e_i/|H_i| \! - \! \mu_2)u_i^2+(e_j/|H_j| \! - \!\mu_2)u_j^2 \geqslant 0,
\end{align*}
from which we obtain
\begin{equation}
    \label{eqn:pair}
    (e_i/|H_i| \! - \! \mu_2)|H_j|+(e_j/|H_j| \! - \!\mu_2)|H_i| \geqslant 0.
\end{equation}

Now suppose $e_1/|H_1| < \mu_2$.
Then \eqref{eqn:pair} implies that $e_j/|H_j| > \mu_2$ for each $j \in \{2,\dots,c\}$.
Let $D = \diag(e_1/|H_1| \! - \! \mu_2,\dots,e_c/|H_c| \! - \!\mu_2)$ and $\mathbf h = (\sqrt{|H_1|},\dots,\sqrt{|H_c|})$.
By the matrix determinant lemma
\[
\det(X) = \det(D)\left (1+\frac{\mu_2}{n}\mathbf h^\transpose D^{-1} \mathbf h\right).
\]
Since $\det(D) < 0$, we must have
\[
0 \geqslant 1+\frac{\mu_2}{n}\mathbf h^\transpose D^{-1} \mathbf h = 1 -\frac{\mu_2}{n} \frac{|H_1|}{\mu_2-e_1/|H_1|}+\frac{\mu_2}{n}\sum_{i=2}^c \frac{|H_i|}{e_i/|H_i|-\mu_2},
\]
from which the conclusion follows.
\end{proof}

\section{Proof of the main theorem}
\label{sec:proof}

Now we give a proof of Theorem~\ref{thm:main}.
Let $\Gamma$ be a connected non-complete graph with minimum degree $\delta$ and Laplacian eigenvalues $0=\mu_1<\mu_2\leqslant \cdots \leqslant \mu_n$.
Suppose that $U$ is a vertex cut of $\Gamma$ with vertex-cut partition $\pi=\{H_1,\dots,H_{c},U\}$.
We prove that $|U|(\mu_n-\delta) \geqslant c \mu_2$.

First, suppose that $e_i/|H_i| \geqslant\mu_2$ for each $i \in \{1,\dots,c\}$.
Then, by Lemma~\ref{lem:up1},
\[
        |U|(\mu_n-\delta)
        \geqslant \sum_{i=1}^{c} \frac{e_i}{|H_i|}
        \geqslant c\mu_2,
\]
and we are done. 
It therefore remains to consider the case when there exists some $i \in \{1,\dots,c\}$ such that $e_i/|H_i| <\mu_2$.
By Lemma~\ref{lem:am1}, this $i$ must be unique.
Up to relabelling the components of $\Gamma - U$, we may assume $i = 1$ and write
\[
        e_1/|H_1|=\mu_2-\alpha,
        \qquad
        e_j/|H_j|=\mu_2+\beta_j, \quad \text{ for } j \in \{2,\dots,  c\},
\]
where $0<\alpha<\mu_2$ and, by Lemma~\ref{lem:am1}, we have 
\[
\beta_j \geqslant \alpha \frac{|H_j|}{|H_1|}.
\]

Suppose, for a contradiction, that
\begin{equation}\label{eq:contradict}
       |U|(\mu_n-\delta)< c\mu_2 .          
\end{equation}
Then, again by Lemma~\ref{lem:up1},
\[
        c\mu_2>
        |U|(\mu_n-\delta)
        \geqslant \sum_{j=1}^{c} \frac{e_j}{|H_j|}
        =
        c\mu_2-\alpha+\sum_{j=2}^{c} \beta_j .
\]
Hence,
\begin{equation}\label{eq:alphabeta}
        \alpha > \sum_{j=2}^{c} \beta_j .                         
\end{equation}

Substitute $n=|U|+|H_1|+\sum_{j=2}^{c} |H_j|$
into \eqref{eqn:matdetineq} to obtain
\begin{equation}\label{eq:h1-ge}
        |H_1|\geqslant
        \frac{
        \alpha\left(
        |U|+\sum_{j=2}^{c} |H_j|
        +\mu_2\sum_{j=2}^{c} |H_j|/\beta_j
        \right)}
        {\mu_2-\alpha}.       
\end{equation}
Combining \eqref{eq:Uge} and \eqref{eq:h1-ge} one obtains
\begin{equation}
    \label{eqn:Ubound}
    |U|\geqslant
\frac{\alpha}{\mu_n-\mu_2+\alpha}
\left(|U|+\sum_{j=2}^{c}|H_j|
+\mu_2\sum_{j=2}^{c}\frac{|H_j|}{\beta_j}\right)
+
\sum_{j=2}^{c}\frac{(\mu_2+\beta_j)|H_j|}{\mu_n-\mu_2-\beta_j}.
\end{equation}
Since
\[
1-\frac{\alpha}{\mu_n-\mu_2+\alpha}
=
\frac{\mu_n-\mu_2}{\mu_n-\mu_2+\alpha},
\]
collecting terms in \eqref{eqn:Ubound} involving $|U|$ yields
\[
|U|(\mu_n-\mu_2)
\geqslant
\alpha\left(\sum_{j=2}^{c}|H_j|
+\mu_2\sum_{j=2}^{c}\frac{|H_j|}{\beta_j}\right)
+
(\mu_n-\mu_2+\alpha)\sum_{j=2}^{c}
\frac{(\mu_2+\beta_j)|H_j|}{\mu_n-\mu_2-\beta_j},
\]
which simplifies to
\[
        |U|(\mu_n-\mu_2)
        \geqslant
        \sum_{j=2}^{c} |H_j| (\mu_2+\beta_j) \left( \frac{\alpha}{\beta_j} + \frac{\mu_n-\mu_2+\alpha}{\mu_n-\mu_2-\beta_j} \right).
\]
Since $\mu_2+\beta_j > \mu_2$ for all $j \in \{2,\dots,c\}$, we have
\begin{equation}
    \label{eq:target1}
            |U|(\mu_n-\mu_2)
        >
        \mu_2 \sum_{j=2}^{c} |H_j| \left( \frac{\alpha}{\beta_j} + \frac{\mu_n-\mu_2+\alpha}{\mu_n-\mu_2-\beta_j} \right).
\end{equation}
By Lemma~\ref{lem:mubounds}, we have
\[
        \mu_n-\mu_2\geqslant \mu_n-\delta>1 .
\]
Multiply both sides of \eqref{eq:target1} by  $\frac{\mu_n-\delta}{\mu_n-\mu_2}$ to obtain
\begin{equation}\label{eq:target}
        |U|(\mu_n-\delta)
        >
        \frac{(\mu_n-\delta)\mu_2}{\mu_n-\mu_2} \sum_{j=2}^{c} |H_j| \left( \frac{\alpha}{\beta_j} + \frac{\mu_n-\mu_2+\alpha}{\mu_n-\mu_2-\beta_j} \right).
\end{equation}
In order to contradict \eqref{eq:contradict}, it remains to show that the right-hand side of \eqref{eq:target} is at least $c\mu_2$.

By Lemma~\ref{lem:up1}, we have $\mu_n-\mu_2-\beta_j > 0$ for each $j \in \{2,\dots,c\}$.
Set $\Sigma=\sum_{j=2}^{c}{\beta_j}$ and $\Phi=\sum_{j=2}^{c}\sqrt{|H_j|}$. 
By two separate applications of the Cauchy-Schwarz inequality and one application of \eqref{eq:alphabeta}, one finds that
\begin{align}\label{eq:hi-sum-ge}
        \sum_{j=2}^{c} |H_j| \left( \frac{\alpha}{\beta_j} + \frac{\mu_n-\mu_2+\alpha}{\mu_n-\mu_2-\beta_j} \right)
        &\geqslant
        \Phi^2 \left( \frac{\alpha}{\Sigma} + \frac{\mu_n-\mu_2+\alpha}{(c-1)(\mu_n-\mu_2)-\Sigma}\right) \nonumber \\ 
        &>
        \Phi^2 \left( \frac{c(\mu_n-\mu_2)}{(c-1)(\mu_n-\mu_2)-\Sigma} \right).            
\end{align}

Using \eqref{eq:degbound}, we can write $\mu_2+\beta_j=e_j/|H_j|\geqslant \delta-|H_j|+1$, which rearranges to $|H_j|\geqslant \delta-\mu_2+1-\beta_j$. 
Summing this over all components yields
\begin{equation}\label{eq:R-ge-2}
        \Phi^2 \geqslant \sum_{j=2}^{c} |H_j| \geqslant (c-1)(\delta-\mu_2+1)-\Sigma.              
\end{equation}

We claim that
\begin{equation}\label{eq:R-ge}
        \Phi^2 \geqslant \frac{(c-1)(\mu_n-\mu_2)-\Sigma}{\mu_n-\delta}.        
\end{equation}
Indeed, if $\Sigma\geqslant (c-1)(\delta-\mu_2)$, then
\[
        \frac{(c-1)(\mu_n-\mu_2)-\Sigma}{\mu_n-\delta} \leqslant c-1 \leqslant \sum_{j=2}^{c} |H_j|  \leqslant \Phi^2,
\]
since we have $|H_j| \geqslant 1$ for each $j \in \{2,\dots,c\}$. 
Otherwise, suppose that $\Sigma < (c-1)(\delta-\mu_2)$.
Then, using Lemma~\ref{lem:mubounds}, one can check that the right-hand side of \eqref{eq:R-ge-2} is at least that of \eqref{eq:R-ge}.

Now we combine \eqref{eq:R-ge} and \eqref{eq:hi-sum-ge} to obtain
\[
        \sum_{j=2}^{c} |H_j| \left( \frac{\alpha}{\beta_j} + \frac{\mu_n-\mu_2+\alpha}{\mu_n-\mu_2-\beta_j} \right)
        >
        \frac{c(\mu_n-\mu_2)}{\mu_n-\delta}.
\]
Finally, \eqref{eq:target} becomes
\[
        |U|(\mu_n-\delta) > \frac{(\mu_n-\delta)\mu_2}{\mu_n-\mu_2} \left( \frac{c(\mu_n-\mu_2)}{\mu_n-\delta} \right) = c\mu_2,
\]
which contradicts \eqref{eq:contradict}.


\begin{thebibliography}{99}

\bibitem{Alon1995}
N. Alon,
\emph{Tough Ramsey graphs without short cycles},
J. Algebraic Combin. \textbf{4} (1995), no. 3, 189--195.

\bibitem{BBS}
D. Bauer, H. Broersma, and E. Schmeichel, \textit{Toughness in graphs -- a survey},
Graphs Combin. 22 (2006), no. 1, 1--35.

\bibitem{Brouwer1995}
A.E. Brouwer,
\emph{Toughness and spectrum of a graph},
Linear Algebra Appl. \textbf{226--228} (1995), 267--271.

\bibitem{BW}
A.E. Brouwer and W.H. Haemers, 
\textit{Spectra of graphs},
Springer, New York, 2012.

\bibitem{Chvatal1973}
V. Chv\'atal,
\emph{Tough graphs and Hamiltonian circuits},
Discrete Math. \textbf{5} (1973), 215--228.

\bibitem{CG}
S.M. Cioabă and X. Gu, \textit{Connectivity, toughness, spanning trees of bounded
degree, and the spectrum of regular graphs}, Czech. Math. J. \textbf{66}(141) (2016), no. 3, 913--924.

\bibitem{CW} S.M. Cioabă and W. Wong, \textit{The spectrum and toughness of regular graphs}, Discrete
Appl. Math. \textbf{176} (2014), 43--52.

\bibitem{Fiedler1973}
M. Fiedler,
\emph{Algebraic connectivity of graphs},
Czech. Math. J. \textbf{23} (1973), 298--305.

\bibitem{GuB}
X. Gu,
\textit{A proof of Brouwer’s toughness conjecture},
SIAM J. Discrete Math. \textbf{35} (2021), no. 2, 948–952.

\bibitem{Gu2021}
X. Gu,
\emph{Toughness in pseudo-random graphs},
European J. Combin. \textbf{92} (2021), Paper No. 103255.

\bibitem{GuHaemers2022}
X. Gu and W.H. Haemers,
\emph{Graph toughness from Laplacian eigenvalues},
Algebraic Combin. \textbf{5} (2022), no. 1, 53--61.

\bibitem{H}
W.H. Haemers,
\textit{Toughness conjecture},
posted in ResearchGate, (2020),
available at \href{https://www.researchgate.net/publication/348437253}{https://www.researchgate.net/publication/348437253} (last checked 05/05/2026).

\end{thebibliography}
\end{document}